\title{Digitally delicate primes}
\author{Jackson Hopper\thanks{\emph{email:} \texttt{jacksonh@uga.edu}} \qquad
Paul Pollack\thanks{\emph{email:} \texttt{pollack@uga.edu}}\\ Department of Mathematics\\ University of Georgia\\ Athens, Georgia 30602}
\date{\vspace{-3.2ex}}
\newcommand{\Ss}{\mathcal{S}}
\newcommand{\Pp}{\mathcal{P}}
\newtheorem{theorem}{Theorem}[section]
\newtheorem{corollary}[theorem]{Corollary}
\newtheorem{lemma}[theorem]{Lemma}
\begin{document}
\maketitle

\begin{abstract} \noindent Tao has shown that in any fixed base, a positive proportion of prime numbers cannot have any digit changed and remain prime. In other words, most primes are ``digitally delicate''. We strengthen this result in a manner suggested by Tao: A positive proportion of primes become composite under any change of a single digit and any insertion a fixed number of arbitrary digits at the beginning or end.
\end{abstract}

%
%

\section{Introduction}
In a short note published in 2008, Tao \cite{tao11} proved the following theorem:
\begin{theorem}
\label{Tao}
Let $K \ge 2$ be an integer. For all sufficiently large integers $N$, the number of primes $p$ between $N$ and $(1 + 1/K)N$ such that $|kp + ja^i|$ is composite for all integers $1 \le a, |j|, k \le K$ and $0 \le i \le K \log{N}$ is at least $c_K \frac{N}{\log{N}}$ for some constant $c_K > 0$ depending on only $K$.
\end{theorem}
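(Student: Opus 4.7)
The plan is to adapt the classical Sierpi\'nski--Selfridge covering-congruences construction, applied now to the digit position $i$. I will find an arithmetic progression $p\equiv p_0\pmod{M}$ with $\gcd(p_0,M)=1$ and $M=M(K)$ a fixed modulus, such that for every $p$ in this progression and every admissible quadruple $(a,j,k,i)$ the integer $kp+ja^i$ is divisible by some prescribed prime factor of $M$ that is strictly smaller than $|kp+ja^i|$; this forces $|kp+ja^i|$ to be composite except on a sparse set of exceptional $p$. Applying the prime number theorem for arithmetic progressions to $p\equiv p_0\pmod{M}$ in $[N,(1+1/K)N]$ then yields $\gg_K N/\log N$ such primes.

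For each base $a$ with $2\le a\le K$, fix a covering system $\{(r_{a,s},m_{a,s}):1\le s\le t_a\}$ of the integers, i.e., a finite family of congruences $i\equiv r_{a,s}\pmod{m_{a,s}}$ such that every integer $i$ lies in at least one class. We have full freedom in choosing the $m_{a,s}$, and shall take each to be a highly composite integer. Zsygmondy's theorem supplies, for each sufficiently large divisor $d\mid m_{a,s}$, a primitive prime divisor $q$ of $a^d-1$, satisfying $\ord_q(a)=d$, $q\nmid a$, and $q>K$. By taking the $m_{a,s}$ to have enough distinct divisors and, across different pairs $(a,s)$, to have disjoint sets of primitive primes, we produce, for every quintuple $(a,s,j,k)$ with $2\le a\le K$, $1\le s\le t_a$, $1\le k\le K$, $1\le|j|\le K$, a prime $q_{a,s,j,k}\mid a^{m_{a,s}}-1$, with all these primes pairwise distinct and each exceeding $K$. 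The trivial case $a=1$ is absorbed in the same way: for each $(j,k)$ reserve a fresh auxiliary prime $q_{1,j,k}>K$, distinct from all the others.

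Let $M$ be the product of all of these primes. By the Chinese Remainder Theorem, choose $p_0\pmod{M}$ satisfying
\[
p_0\equiv -j\,a^{r_{a,s}}\,k^{-1}\pmod{q_{a,s,j,k}}\qquad\text{for every }(a,s,j,k)\text{ with }a\ge 2,
\]
together with $p_0\equiv -jk^{-1}\pmod{q_{1,j,k}}$ for all $(j,k)$. Since each modulus exceeds $K\ge|j|$ and is coprime to both $a$ and $k$, every target residue is nonzero, so $\gcd(p_0,M)=1$. Now let $p\equiv p_0\pmod{M}$ and let $(a,j,k,i)$ be admissible with $a\ge 2$; pick $s$ with $i\equiv r_{a,s}\pmod{m_{a,s}}$. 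Since $\ord_{q_{a,s,j,k}}(a)\mid m_{a,s}$, we get $a^i\equiv a^{r_{a,s}}\pmod{q_{a,s,j,k}}$, whence
\[
kp+ja^i\equiv kp_0+ja^{r_{a,s}}\equiv 0\pmod{q_{a,s,j,k}}.
\]
Thus $|kp+ja^i|$ is a multiple of the prime $q_{a,s,j,k}$, and for $p$ large relative to $M$ it exceeds $q_{a,s,j,k}$, hence is composite, outside an exceptional set of at most $O_K(\log N)$ values of $p$---namely those for which $kp+ja^i$ vanishes or equals $\pm q_{a,s,j,k}$ for some admissible quadruple. The case $a=1$ is similar and easier.

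Finally, the prime number theorem for arithmetic progressions yields
\[
\#\{p\in[N,(1+1/K)N]:p\text{ prime},\;p\equiv p_0\pmod{M}\}=(1+o(1))\,\frac{N/K}{\varphi(M)\log N},
\]
which dominates the $O_K(\log N)$ exceptions and gives $c_K N/\log N$ primes for some $c_K>0$. The main obstacle I anticipate is the combinatorial task of producing covering systems whose moduli $m_{a,s}$ supply enough primitive prime divisors of $a^{m_{a,s}}-1$ to accommodate all $2K^2 t_a$ required residues, while keeping these primes globally disjoint across all $(a,s)$. Zsygmondy's theorem makes each local supply abundant, so this is really a matter of careful bookkeeping---taking the $m_{a,s}$ rich in divisors and choosing their divisor structures so that the resulting Zsygmondy primes do not collide---rather than any deep new input.
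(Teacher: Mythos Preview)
Your approach is genuinely different from the paper's: you propose a \emph{full} covering of the exponent $i$ (in the spirit of Erd\H{o}s and Sierpi\'nski), whereas the paper, following Tao, uses only a \emph{partial} covering and mops up the uncovered exponents with the Brun--Selberg sieve. If your construction goes through, the sieve is unnecessary and the argument is more elementary.

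However, the step you label ``bookkeeping'' is in fact the crux, and you have not carried it out. You need, for each base $a$ and each of the $2K^2$ pairs $(j,k)$, a covering $\{(r_s,m_s)\}$ of the integers together with primes $q_{s,j,k}\mid a^{m_s}-1$, \emph{all pairwise distinct}. Zsygmondy tells you that each divisor $d\mid m_s$ contributes a primitive prime of $a^d-1$, but this supply is tightly constrained: the primes available for class $s$ are exactly the prime divisors of $a^{m_s}-1$, and the assignment of distinct primes to the $2K^2 t_a$ slots is a Hall-matching problem. In particular, a necessary condition is that $d(m_s)\ge 2K^2$ for every $s$, and more generally $|\bigcup_{s\in S}\mathrm{Div}(m_s)|\ge 2K^2|S|$ for every $S$. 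Small moduli violate this badly (for $a=2$ the modulus $m=2$ supplies only the single prime $3$), so you cannot use any standard covering such as $\{2,3,4,6,12\}$. Subdividing each class modulo a highly composite $H$ does not help either: it multiplies the number of slots by $H$ while increasing the pool of divisors only by a factor $d(H)=H^{o(1)}$. One can also check that any successful construction forces $\sigma(M)/M\ge 2K^2$ for $M=\mathrm{lcm}(m_s)$, so the moduli must be enormous and very carefully chosen. This may be doable, but it is a real combinatorial problem, not routine.

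The paper avoids this entirely. For each $(a,j,k)$ it takes a disjoint set $\mathcal{P}_{a,j,k}$ of \emph{prime} moduli $p$ with $\sum_{p}1/p$ large, and uses the single Zsygmondy prime $q_p=P(a^p-1)$ for each. Because the orders $\ord_{q_p}(a)=p$ are distinct primes, the $q_p$ are automatically pairwise distinct --- no matching problem arises. The price is that the congruences $i\equiv 0$ (or $1$) $\pmod p$ do not cover all $i$; the uncovered set has density $\prod_p(1-1/p)\le e^{-\theta_K M}$, and Lemma~\ref{Th:selberg} shows that these leftover $i$ contribute negligibly. This trade-off --- partial cover plus sieve instead of full cover --- is exactly what makes the argument go through uniformly in $K$.
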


The following consequence is immediate, in view of the prime number theorem (or Chebyshev's weaker estimates).
\begin{corollary}\label{cor:tao} Fix a base $a\ge 2$. A positive proportion of prime numbers become composite if any single digit in their base $a$ expansion is altered.
\end{corollary}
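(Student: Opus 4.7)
My plan is to read Theorem~\ref{Tao} as essentially already encoding the corollary, by translating the operation ``change a single digit'' into the algebraic form $p + j a^i$ and choosing $K$ large enough (depending on the fixed base $a$) to cover the relevant ranges of $j$ and $i$. If a prime $p$ has base-$a$ expansion $p = \sum_{i=0}^{D-1} d_i a^i$ with $D = \lfloor \log_a p \rfloor + 1$, then altering the digit at position $i$ from $d_i$ to any other $d_i' \in \{0,\ldots,a-1\}$ produces the integer $p + j a^i$ with $j = d_i' - d_i$ satisfying $1 \le |j| \le a-1$. (The case in which the leading digit is replaced by $0$, shortening the expansion, is subsumed.) Hence requiring that every single-digit alteration of $p$ be composite is exactly the statement that $p + j a^i$ is composite for all $1 \le |j| \le a-1$ and all $0 \le i \le D - 1$.

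Next I would pick $K = K(a)$ large enough to satisfy two conditions: (i) $K \ge a$, so that the fixed base lies in the allowed range of the variable called ``$a$'' in Theorem~\ref{Tao} and the admissible digit differences $|j| \le a-1$ lie in $[1,K]$; and (ii) $K \log N \ge D - 1$ for every $p \le (1+1/K) N$ once $N$ is large. Since $D - 1 \le \log_a((1+1/K) N) \le 2 \log N/\log a$ for $N$ sufficiently large, any $K \ge \max\{a,\ \lceil 2/\log a \rceil\}$ works. Applying Theorem~\ref{Tao} with this $K$, with $k = 1$, and with its variable ``$a$'' specialised to our fixed base yields at least $c_K\, N/\log N$ primes $p \in (N, (1+1/K)N]$ for which $p + j a^i$ is composite throughout the required range---equivalently, at least $c_K\, N/\log N$ digitally delicate primes in that interval.

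To conclude, I would let $N$ run along the geometric sequence $N_m = N_0 (1+1/K)^m$ and sum the counts over the disjoint intervals $(N_m, (1+1/K)N_m]$, obtaining at least $c\, X/\log X$ digitally delicate primes up to $X$; by the prime number theorem $\pi(X) \sim X/\log X$, these form a positive proportion of all primes. No real obstacle arises in this argument: it is essentially bookkeeping, and succeeds because $K$ is free to be enlarged to accommodate both the digit values and all digit positions of the primes under consideration.
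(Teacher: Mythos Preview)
Your argument is correct and is exactly the deduction the paper has in mind: it states that the corollary is ``immediate'' from Theorem~\ref{Tao} together with the prime number theorem, and you have simply spelled out that immediate step (translating a digit change into $p+ja^i$, choosing $K$ large enough to cover $|j|\le a-1$ and $i\le D-1$, and invoking the prime number theorem for the density conclusion). The geometric summation in your final paragraph is not even needed---a single interval $(X/(1+1/K),X]$ already yields $\gg X/\log X$ delicate primes, hence positive lower density---but this is a cosmetic point.
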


The infinitude of the primes appearing in Corollary \ref{cor:tao} had earlier been shown by Erd\H{o}s \cite{erdos79}. (He assumes $a=10$ but the argument generalizes in an obvious way.) When $a=10$, these ``digitally delicate'' primes are tabulated as sequence \texttt{A050249} in the OEIS, where they are called ``weakly prime''.


At the conclusion of \cite{tao11}, Tao suggests a few ways his result could possibly be improved. In this paper we establish one of the suggested generalizations:
\begin{theorem}\label{thm:taoimproved}
Fix an integer $K \ge 2$. There is a constant $c_K > 0$ such that the following holds for all sufficiently large $N$: Let $\Ss_N \subseteq {[{-KN}, KN]}$ be an arbitrary set of integers of cardinality at most $K$. Let $K_N$ be the number of primes $N \leq p \leq (1 + 1/K)N$ such that $|kp + ja^i + s|$ is either equal to $p$ or composite for all combinations of integers $a, i, j, k$, and $s$ where $1 \leq a, |j|, k \leq K$, $0 \leq i \leq K \log{N}$, and $s \in \Ss_N$. Then $K_N \ge c_K \frac{N}{\log{N}}$.
\end{theorem}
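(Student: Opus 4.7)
My plan is to extend Tao's covering-system proof of Theorem~\ref{Tao}, treating each shift $s \in \Ss_N$ on essentially the same footing as the parameter $j$. For each admissible quadruple $(a, j, k, s)$ with $a \in \{2, \ldots, K\}$, I would select a finite set of primes $q_1, \ldots, q_m$ together with residues $i_t \pmod{r_t}$, where $r_t = \ord_{q_t}(a)$, forming a covering system of $\mathbb{Z}$, so that every integer $i \in [0, K \log N]$ lies in some $i \equiv i_t \pmod{r_t}$. The congruence $p \equiv -(j a^{i_t} + s) k^{-1} \pmod{q_t}$ then forces $q_t \mid kp + j a^i + s$ whenever $i \equiv i_t \pmod{r_t}$, so every admissible $|kp + j a^i + s|$ acquires a covering-prime divisor. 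Combining all such congruences by CRT, taking the covering primes disjoint across quadruples, places $p$ in a single residue class $p \equiv p^\ast \pmod{M}$ with $M$ a constant depending only on $K$ and $\gcd(p^\ast, M) = 1$. A bounded-modulus Siegel--Walfisz then produces $\gg N / \log N$ primes $p \in [N, (1 + 1/K) N]$ in that class; for each such $p$, every admissible $|kp + j a^i + s|$ is composite once $N$ is large, the finitely many borderline cases $|kp + j a^i + s| \in \{0, 1, p, q_t\}$ being either permitted by the statement or absorbed into a discarded set of size $o(N/\log N)$.

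The new difficulty compared with Tao's argument is twofold. First, disjointness of covering primes must be arranged across \emph{all} $O(K^4)$ quadruples rather than merely across the $O(K^3)$ triples in Tao's proof, since distinct shifts $s \ne s'$ generically demand incompatible residues for $p$ modulo any shared prime. Second, the nonzero shifts introduce an $\Ss_N$-dependent set of ``bad'' primes $q$ for which $q \mid j a^{i_t} + s$, which would undesirably force $p \equiv 0 \pmod{q}$ and thus no admissible primes in our range; these must be excluded from the covering systems. Both difficulties I would handle by working with a covering template of sufficiently large highly composite modulus $R = R_K$: the cyclotomic factorisation $a^R - 1 = \prod_{r \mid R} \Phi_r(a)$ shows, via Bang/Zsygmondy, that the supply of primes with order dividing $R$ modulo $a$ grows without bound as $R$ increases, for each fixed $a \in \{2, \ldots, K\}$. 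With a sufficiently large supply in hand, a Hall-type matching argument should assign disjoint covering primes to each quadruple while avoiding the $\Ss_N$-dependent forbidden primes. The degenerate base $a = 1$ collapses to making $O(K^3)$ linear forms $|kp + j + s|$ simultaneously composite and is handled by a separate one-step covering argument.

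The main obstacle I foresee lies in the second step: ensuring that the supply of primes of prescribed orders modulo each base $a$ really is large enough, uniformly in $\Ss_N$, to accommodate the disjointness and avoidance constraints while keeping $M$ bounded in $N$. The worst case is an adversarial $\Ss_N$ aligning with most available primes of some particular order $r_t$, and one must argue, by introducing sufficient redundancy across multiple orders $r_t$ in the template (so that any one order can be skipped for a given quadruple), that such alignment cannot exhaust the supply. Beyond this combinatorial bookkeeping, no new analytic input beyond what already appears in~\cite{tao11} seems to be needed.
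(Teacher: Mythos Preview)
Your proposal has a genuine gap. First, a clarification: Tao's proof does \emph{not} use a full covering system; like the paper's, it uses a \emph{partial} covering together with the Brun--Selberg upper bound sieve. Your plan to cover every $i \in [0, K\log N]$ via congruences, using a modulus $M$ bounded in terms of $K$ alone, cannot succeed against an adversarial $\Ss_N$.

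Here is a concrete obstruction. Suppose your supply of covering primes is a fixed finite set with product $Q$ (bounded in terms of $K$). The adversary selects some $s \in \Ss_N$ with $s \equiv -j \pmod{Q}$ for a chosen $j \in \{1,\dots,K\}$; this is possible since $|s|\le KN$ and $N$ is large. For the quadruple $(a,j,k,s)$, the exponent $i=0$ then satisfies $q \mid j a^0 + s = j+s$ for \emph{every} prime $q$ in your supply. Since $i=0$ lies in the class $0 \pmod r$ for every modulus $r$, and assigning any supply prime $q$ to cover $0\pmod{\ord_q(a)}$ forces $p \equiv -(j+s)k^{-1}\equiv 0 \pmod q$, the value $i=0$ is uncoverable by any coprime residue condition drawn from your supply. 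No amount of redundancy or Hall-type matching within a bounded supply repairs this: the adversary has made \emph{all} supply primes bad for the single exponent $i=0$. More generally, for each supply prime $q$ the choice of $s \pmod q$ lets the adversary forbid one residue modulo $\ord_q(a)$, independently across primes by CRT; so the adversary can synchronize these to defeat any fixed covering template, and ``skipping one order'' is not enough.

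The paper's proof sidesteps this by accepting that a set $\mathcal{I}$ of exponents remains uncovered, of size $\ll e^{-\theta_K M}\log N$, and handling those $i$ with the sieve (Lemma~\ref{Th:selberg}). The coprimality issue you identify is resolved not by avoidance but by flexibility: for each covering prime $q_p$ one uses whichever of the two residues $0$ or $1 \pmod p$ gives an invertible class, and at least one always does since $q_p > K \ge |j(a-1)|$. The crucial new difficulty beyond Tao's argument --- contrary to your final paragraph --- is that the sieve factor $\prod_{p\mid ja^i+s}(1-1/p)^{-1}$ is no longer $O(1)$ once $s\ne 0$; controlling its average over $i$ requires Cauchy--Schwarz together with the Erd\H{o}s--Romanoff estimate $\sum_{(n,a)=1} 2^{\omega(n)}/(n\,\ell_a(n)) < \infty$. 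This analytic input is essential and is precisely what your covering-only strategy tries, unsuccessfully, to circumvent.
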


This immediately yields the following strengthening of Corollary \ref{cor:tao}.

\begin{corollary} In any fixed base, a positive proportion of prime numbers become composite if one modifies any single digit and appends a bounded number of digits at the beginning or end.
\end{corollary}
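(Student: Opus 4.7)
The plan is to deduce the corollary directly from Theorem \ref{thm:taoimproved} by encoding each admissible digit operation as a quadruple $(k, j, i', s)$ lying in the parameter ranges of that theorem. Fix the base $a \ge 2$ and let $M$ be the prescribed bound on the number of digits one may append at either end. Write $D$ for the number of base-$a$ digits of $p$, so $a^{D-1} \le p < a^D$. The operation of prepending a digit string of value $t$ and length $m_1 \le M$, changing the digit in position $i$ (with $0 \le i < D$) by $j$ (with $|j| \le a-1$), and appending a digit string of value $r$ and length $m_2 \le M$ produces the integer
\begin{equation*}
n = t\, a^{D+m_2} + (p + j a^i) a^{m_2} + r = a^{m_2} p + j\, a^{i+m_2} + \bigl(t\, a^{D+m_2} + r\bigr).
\end{equation*}
Setting $k := a^{m_2}$, $i' := i + m_2$, $j' := j$, and $s := t\, a^{D+m_2} + r$ puts $n$ in the form $kp + j' a^{i'} + s$ required by Theorem \ref{thm:taoimproved}.

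I would then choose $K = K(a, M)$ large enough that all parameter bounds hold simultaneously: $K \ge a^M$ bounds $k = a^{m_2}$; $K \ge a-1$ bounds $|j'|$; $K \ge 2 a^{2M+1}$ ensures $|s| \le KN$ (using $a^D \le ap \le a(1+1/K)N$); and $i' \le K \log N$ is automatic for $N$ large. Finally, as $(t, r, m_1, m_2)$ vary and $D$ takes at most two possible values on $[N, (1+1/K)N]$ (depending on whether a power of $a$ lies in that interval), the set $\mathcal{S}_N$ of all resulting $s$-values has cardinality bounded by a function of $a$ and $M$ alone, so we may take $K$ at least that bound as well. Theorem \ref{thm:taoimproved} then furnishes $\ge c_K N / \log N$ primes $p \in [N, (1+1/K)N]$ for which $|kp + j' a^{i'} + s|$ equals $p$ or is composite for every admissible quadruple. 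Since every nontrivial combined operation yields a positive integer $n > 1$ for $N$ large, $|n| = n$, and hence $n$ itself is composite whenever $n \ne p$, which holds for every nontrivial operation.

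Finally, to pass from ``positive proportion in $[N, (1+1/K)N]$'' to ``positive proportion of all primes'', I would cover $[2, X]$ by the geometric intervals $[N_\ell, (1+1/K)N_\ell]$ with $N_{\ell+1} = (1+1/K) N_\ell$, apply the above in each, and sum to obtain $\gg_K X/\log X$ robustly digitally delicate primes up to $X$; this is the same routine dyadic summation that deduces Corollary \ref{cor:tao} from Theorem \ref{Tao}. The main obstacle is purely bookkeeping: verifying that every combined digit operation fits within the parameter ranges of Theorem \ref{thm:taoimproved}, and correctly enumerating the (bounded) set $\mathcal{S}_N$ of possible $s$-values across the two possible values of $D$ on the interval.
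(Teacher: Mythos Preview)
Your approach is correct and is exactly what the paper intends: the paper simply declares the corollary an immediate consequence of Theorem~\ref{thm:taoimproved}, and you have supplied the natural encoding of each combined digit operation as a tuple $(a,k,j',i',s)$ lying in that theorem's parameter ranges, together with the routine verification that $\mathcal{S}_N$ has bounded size.

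One small correction: your assertion that $n\ne p$ ``holds for every nontrivial operation'' is not quite true. For example, in base~$2$, if $p=2^{k}-1$ is a Mersenne prime, then changing the leading digit to~$0$ and appending a single~$1$ returns~$p$. The easy fix is to observe that $n=p$ forces $t=0$ and $(a^{m_2}-1)\,p = |j|\,a^{\,i+m_2}-r$, so $p$ is determined by the tuple $(j,i,m_2,r)$; there are only $O(\log N)$ such tuples, and discarding the corresponding primes does not disturb the lower bound $\gg N/\log N$.
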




As in Tao's work, the key idea of the proof is to use a partial covering along with an upper bound sieve. The following well-known estimate plays a critical role (see \cite[Theorem 2.2, p 68]{HR74}, \cite[Corollary A.2]{tao11}).
\begin{lemma}[Brun/Selberg upper bound]
\label{Th:selberg}
Let $W$ and $b$ be positive integers and let $k$ and $h$ be non-zero integers. If $x$ is sufficiently large (depending on $W$ and $b$), the number of primes $m \le x$ where $m \equiv b \pmod{W}$ and $|km + h|$ is also prime is
\[
\ll_{k} \frac{x}{W (\log x)^2} \Bigg(\prod_{p \mid W} \left(1 - \frac{1}{p}\right)^{-2}\Bigg) \Bigg(\prod_{\substack{p \mid h \\ p \nmid W}} \left(1 - \frac{1}{p}\right)^{-1}\Bigg),
\]
where the products are restricted to prime numbers $p$.
\end{lemma}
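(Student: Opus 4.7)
The strategy is a direct application of the Selberg $\Lambda^2$-sieve to the polynomial $F(m) = m(km+h)$ on the arithmetic progression $\mathcal{A}=\{m\le x : m\equiv b\pmod W\}$. Any prime $m\le x$ with $m\equiv b\pmod W$ and $|km+h|$ prime, apart from at most $O_k(1)$ exceptions (the primes dividing $kh$), contributes to the sifted set
\[
S(\mathcal{A},z) \;=\; \#\{m\in\mathcal{A} : p\nmid F(m)\text{ for all primes } p<z,\ p\nmid W\},
\]
once we take $z$ to be a small fractional power of $x$. Thus it suffices to bound $S(\mathcal{A},z)$ from above.

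The local density function is $\omega(p)=\#\{r\bmod p : F(r)\equiv 0\pmod p\}$. For primes $p\nmid Wkh$ the two roots $r\equiv 0$ and $r\equiv -h/k$ are distinct, giving $\omega(p)=2$; for $p\mid h$, $p\nmid Wk$ the two roots coincide, giving $\omega(p)=1$; the finitely many primes $p\mid k$, $p\nmid W$ contribute $\omega(p)\le 1$ and can be absorbed into the $\ll_k$ constant. The standard Selberg bound then yields
\[
S(\mathcal{A},z) \;\le\; \frac{x/W}{G(z)} + R(z), \qquad G(z) \;=\; \sum_{\substack{d\le \sqrt z\\ (d,W)=1\\ \mu^2(d)=1}} \prod_{p\mid d}\frac{\omega(p)}{p-\omega(p)},
\]
with the well-known remainder estimate $R(z)\ll \sum_{d\le z,(d,W)=1}3^{\nu(d)}\ll z(\log z)^2$. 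Picking $z=x^{1/3}$ (say) makes $R(z)$ negligible compared with the target bound.

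To finish, I would estimate $G(z)$ from below by the standard Mertens-type computation:
\[
G(z) \;\gg\; \prod_{\substack{p<z\\ p\nmid W}}\!\left(1-\frac{\omega(p)}{p}\right)^{\!-1}
\;\gg_k\;
(\log z)^2 \prod_{p\mid W}\!\left(1-\tfrac1p\right)^{\!2}\prod_{\substack{p\mid h\\ p\nmid W}}\!\left(1-\tfrac1p\right),
\]
where the first step uses $\prod_{d\le \sqrt z}$ vs. $\prod_{p<z}$ comparison (Wirsing/Selberg's lemma), and the second step rewrites $(1-2/p)^{-1}$ as $(1-1/p)^{-2}$ times a convergent Euler product, with Mertens' theorem supplying the $(\log z)^2$. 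Inverting this lower bound for $G(z)$ and using $\log z\asymp \log x$ yields precisely the stated upper bound, with the products over $p\mid W$ and over $p\mid h$, $p\nmid W$ appearing as the announced correction factors.

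\paragraph{Main obstacle.}
The conceptual content is entirely standard; the one point requiring care is the bookkeeping of local factors. Specifically, one must trace how $(1-\omega(p)/p)^{-1}$ splits as $(1-1/p)^{-2}$ for generic $p$ and degenerates at $p\mid Wh$, and verify that the exceptional primes $p\mid k$ contribute only a $k$-dependent constant, so that the final Euler products match the ones displayed in the lemma statement.
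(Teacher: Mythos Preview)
The paper does not prove this lemma at all: it is stated as a well-known input and referenced to Halberstam--Richert \cite[Theorem~2.2]{HR74} and Tao \cite[Corollary~A.2]{tao11}, with no argument given. Your sketch is therefore not competing with any proof in the paper; rather, it is (a compressed version of) the standard Selberg-sieve derivation that underlies those references. In that sense your approach is exactly the ``right'' one, and nothing you wrote conflicts with the paper.

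A couple of small points of bookkeeping in your sketch deserve tightening if you were to write this out in full. First, your notation conflates the sifting level $z$ with the support parameter of the Selberg weights: you sift by primes $p<z$ but then write $G(z)=\sum_{d\le\sqrt z}(\cdots)$ and later choose $z=x^{1/3}$; it would be cleaner to separate the sifting range from the level $D$ of the $\Lambda^2$-weights and choose $D$ as a small power of $x$. Second, the claim ``the finitely many primes $p\mid k$, $p\nmid W$ contribute $\omega(p)\le 1$'' fails when such a $p$ also divides $h$, since then $km+h\equiv 0\pmod p$ identically and $\omega(p)=p$; but in that case $|km+h|$ is divisible by $p$ for every $m$, so the count of $m$ with $|km+h|$ prime is $O_k(1)$ outright and the lemma is trivial. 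Once these cosmetic issues are addressed, your argument goes through and recovers exactly the displayed Euler-product correction factors.
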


\noindent Whenever Lemma \ref{Th:selberg} is applied in Tao's proof of Theorem \ref{Tao}, the product over $p$ dividing $h$ is uniformly bounded. However, to prove Theorem \ref{thm:taoimproved}, we must deal with cases where that product can be very large. To work around this, we show that such cases arise very rarely, so rarely that this product is bounded in a suitable average sense. To establish this, we need to invoke a classical theorem of Romanoff \cite{romanoff34} about multiplicative orders, which originally appeared in his work on numbers of the form $p+2^k$. (Actually we use a slightly strengthened form of Romanoff's result due to Erd\H{o}s \cite{erdos51}.)

We would like to draw the interested reader's attention to the work reported on in \cite{FKNS10}, \cite{konyagin}, and \cite{GJRW14}, which also concerns problems connected with primality and digital expansions.


%
%

\subsection*{Notation}
We write $\Box \nmid n$ to indicate that $n$ is squarefree. The letter $p$ always denotes a prime. For a given integer $n$ we use $P(n)$ to denote the largest prime divisor of $n$ and $\omega(n)$ for the number of distinct prime factors of $n$. For a given integer $a$, we write $\ell_a(d)$ for the multiplicative order of $a$ modulo $d$. This notation reflects the importance in our analysis of considering $\ell_a(d)$ primarily as a function of $d$ rather than as a  function of $a$.

We use $f = O(g)$, or $f \ll g$, to mean that $|f| \le Cg$ for a suitable constant $C$. We use $f \gg g$ synonymously with $g \ll f$. If $f \ll g \ll f$, we write $f \asymp g$. We use $f = o(g)$ to mean $\lim f/g = 0$ as $N \to \infty$, holding other variables constant.

In what follows, \textbf{implied constants may depend on $K$}. Any further dependence (or independence) will be specified explicitly.
\section{Proof of Theorem \ref{thm:taoimproved}}


\subsection{A selective search}
We will confine our search for ``digitally delicate'' primes to primes lying in a certain conveniently chosen invertible residue class $b\bmod{W}$. (Cognoscenti will recognize this as an instance of the ``$W$-trick''.) To specify the residue class $b\bmod{W}$, we will require the use of a handful primes, determined by $K$ and an integer $M \ge K$.

\begin{lemma}
Let $K \ge 2$ be an integer and let $M \ge K$ also be an integer. There is a set $\mathcal{P}$ that is the disjoint union of sets $\mathcal{P} = \bigcup_{a = 2}^K \mathcal{P}_a$, where for each $2 \le a \le K$, $\mathcal{P}_a$ is a finite set of primes such that:

\begin{enumerate}[\upshape (i)]
\item \label{P1}for all $p \in \mathcal{P}_a$, we have $q_p:=P(a^p-1) > K$,
\item \label{P2} the primes $q_p$ are distinct for distinct $p \in \mathcal{P}$,
\item  \label{P3} $\displaystyle \sum_{p \in \mathcal{P}_a} \frac{1}{p} \ge M$.
\end{enumerate}
\end{lemma}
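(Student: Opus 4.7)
The plan is to build the sets $\mathcal{P}_2,\ldots,\mathcal{P}_K$ sequentially, via a greedy selection of primes driven by two elementary observations about the function $p\mapsto P(a^p-1)$. First, for each fixed $a\in\{2,\ldots,K\}$ and all sufficiently large primes $p$, we have $P(a^p-1)>K$: by the theorem of Bang/Zsygmondy, once $p$ is large enough, $a^p-1$ admits a primitive prime divisor $q$ (one with $\ell_a(q)=p$), and then $p\mid q-1$ forces $q\geq p+1>K$, so $P(a^p-1)\geq q>K$.

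The second and more crucial observation is that for each fixed $a$ the map $p\mapsto P(a^p-1)$ is \emph{injective} on the set of primes where $P(a^p-1)>K$. Indeed, suppose $q:=P(a^{p_1}-1)=P(a^{p_2}-1)>K$. Since $q>K\geq a$, we have $q\nmid a$ and $q\nmid a-1$, so $\ell_a(q)>1$. But $\ell_a(q)$ divides each of the primes $p_1$ and $p_2$, forcing $\ell_a(q)=p_1=p_2$.

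With these in hand, the construction is routine. Suppose $\mathcal{P}_2,\ldots,\mathcal{P}_{a-1}$ have already been built as finite sets enjoying (i)--(iii). Set $\mathcal{P}^{<a}:=\bigcup_{a'<a}\mathcal{P}_{a'}$ and $Q^{<a}:=\bigcup_{a'<a}\{P((a')^p-1):p\in\mathcal{P}_{a'}\}$; both are finite. Consider primes $p$ large enough that $P(a^p-1)>K$, discarding (a) those in $\mathcal{P}^{<a}$, and (b) those with $P(a^p-1)\in Q^{<a}$. Condition (a) excludes at most $|\mathcal{P}^{<a}|$ primes, and by the injectivity just established condition (b) excludes at most $|Q^{<a}|$ primes. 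Since only finitely many primes are removed and $\sum_p 1/p$ diverges, we may take $\mathcal{P}_a$ to be a finite set of the remaining primes with $\sum_{p\in\mathcal{P}_a}1/p\geq M$. Property (i) then holds by the first observation, (iii) by design, and (ii) via the second observation (distinctness within $\mathcal{P}_a$) together with the exclusion (b) (disjointness between $\{P(a^p-1):p\in\mathcal{P}_a\}$ and $Q^{<a}$).

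The real content of the lemma is concentrated in the injectivity step: without it, one could not rule out the possibility that a single large $q$-value arises from infinitely many primes $p$ for some base $a$, which would let the greedy exclusions destroy the divergence needed for (iii). Once injectivity and Bang/Zsygmondy are in place, no further obstacle remains.
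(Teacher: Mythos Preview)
Your argument is correct. Both proofs share the same inductive skeleton and the same key observation that $\ell_a\!\bigl(P(a^p-1)\bigr)=p$ once $P(a^p-1)>a$, which forces the map $p\mapsto P(a^p-1)$ to be injective and thereby handles distinctness of the $q_p$ within a single $\mathcal{P}_a$. The difference lies in the input and in how cross-set distinctness is arranged. The paper invokes Stewart's lower bound $P(a^p-1)\gg p\log p$, so that by choosing $p_0$ large it can force every new $q_p$ to exceed all previously selected $q_{p'}$; you instead use only Zsygmondy to get $P(a^p-1)>K$ for large $p$, and then exploit injectivity a second time to observe that the exclusion $P(a^p-1)\in Q^{<a}$ removes only finitely many $p$. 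Your route is strictly more elementary, replacing a consequence of Baker's theorem by a one-line order argument. On the other hand, the paper's appeal to Stewart is not gratuitous: immediately after the lemma it needs the estimate $\sum_{p\in\mathcal{P}}1/q_p=O(1)$, which follows from $q_p\gg p\log p$ but not from the bound $q_p\ge p+1$ that your argument yields. So your proof of the lemma stands on its own, but if you were rewriting the whole paper you would still need Stewart (or some substitute) at that later point.
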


\begin{proof} According to a theorem of Stewart \cite[Theorem 1]{stewart77}, $P(a^p - 1) \gg p \log{p}$ for all primes $p$ and all $2 \le a \le K$. (See \cite{stewart13} for a more recent, much stronger estimate.) Keeping this mind, we construct the sets $\Pp_{a}$ inductively.  Given an integer $a$ with $2\le a \le K$, assume that the sets $\mathcal{P}_n$ have been constructed for all integers $2 \le n < a$.

We construct $\Pp_{a}$ as follows. By Stewart's result, we can pick $p_0$ so that whenever $p > p_0$, we have $P(a^p-1)$ larger than $K$ and larger than any element of $q_{p'}$ for $p' \in \bigcup_{2 \le n < a}\Pp_{n}$. As $p$ runs through the consecutive primes succeeding $p_0$, the numbers $P(a^p-1)$ are distinct, since the order of $a$ modulo $P(a^p-1)$ is precisely $p$. So we can construct $\mathcal{P}_a$ as the set of the first several consecutive primes exceeding $p_0$. Here ``first several'' means that we continue adding primes to $\Pp_{a}$ until (\ref{P3}) holds. This is possible due to the divergence of $\sum_p 1/p$ when $p$ is taken over all primes.
\end{proof}

We now set
\[ W = \prod_{p \in \mathcal{P}} q_p. \]
Observe that from Stewart's theorem quoted above,
\begin{equation}
\label{qo1}
\sum_{p \mid W} \frac{1}{p} = \sum_{p \in \mathcal{P}} \frac{1}{q_p} \ll \sum_{p} \frac{1}{p \log{p}} = O(1);
\end{equation}
here the final estimate follows, for example, by partial summation along with the prime number theorem.

Assume $M$ is sufficiently large in terms of $K$. Then we can partition each $\mathcal{P}_a$ into disjoint sets $\mathcal{P}_{a,j,k,s}$ such that
\[
\mathcal{P}_a = \bigcup_{1 \le |j| \le K} \bigcup_{k = 1}^{K} \bigcup_{s \in \Ss_N} \mathcal{P}_{a, j, k, s}
\]
and for each $\mathcal{P}_{a, j, k, s}$ we have
\begin{equation}\label{eq:moderatelymany}
\sum_{p \in \mathcal{P}_{a, j, k, s}} \frac{1}{p} \gg M.
\end{equation}

We now make our choice of residue class $b\bmod{W}$. Suppose $2\le a\le K$, $1\le |j|, k \le K$, and $s \in \Ss_N$. Let $p \in \Pp_{a,j,k,s}$. Since $q_p > K \ge k$, we know that $k^{-1}$ exists modulo $q_p$. Moreover, at least one of the two residue classes $k^{-1}(j+s) \bmod{q_p}$ or $k^{-1}(ja+s) \bmod{q_p}$ is invertible. Pick one, and say it is $b_p\bmod{W}$. We determine $b\bmod{W}$ as the solution to the simultaneous congruences
\[ b \equiv -b_p \pmod{q_p} \quad\text{for all $p \in \Pp$}. \]
Note that $b\bmod{W}$ is indeed a coprime residue class.

\subsection{Some initial reductions}
In what follows, we will always assume $N$ is sufficiently large in terms of fixed parameters $M$ and $K$. (Ultimately, $M$ will be chosen sufficiently large in terms of $K$.) Let
\[
Q_N := \#\{m \in {[N, (1 + \frac{1}{K})N]}: m \equiv b \pmod{W},\, m \text{ prime}\}.
\]
By the prime number theorem for arithmetic progressions, \[ Q_N \gg \frac{N}{\phi(W) \log{N}}.\] We would like to show that the same lower bound holds even after removing from our count those $m$ having $|km+ja^i+s|$ noncomposite (and $\ne p$) for some $1\le a,|j|,k\le K$, $0\le i \le K\log{N}$, and $s\in \Ss_N$.

We first dispense with those cases when $|km+ja^i+s|$ is noncomposite in virtue of having $|km+ja^i+s| \le 1$. Let
\begin{multline*}
E := \#\{p \in {[N, (1 + \frac{1}{K})N]}: m \equiv b \pmod{W},\,m \text{ prime},
\\
|km + ja^i + s| \le 1 \text{ for some value of } a, i, j, k, s\}
\end{multline*}
A given combination of $a, i, j, k$, and $s$ can contribute only $O(1)$ elements $m$ to $E$, so we have $E \ll \log{N}$. This bound is clearly $o(Q_N)$, and so is negligible for us.

It remains to discard those $m$ having $|km+ja^i+s|$ prime (and $\ne m$) for some $a,i,j,k,s$ as above. We may assume $ja^i+s\ne 0$. Otherwise $|km|$ is prime, forcing $k=1$ and $|km+ja^i+s|=m$, contrary to hypothesis.

The next easiest series of cases correspond to $a=1$. In these cases, $|km+j+s|$ is prime (and $\ne m$) for some $1\le |j|,k \le K$ and $s\in \Ss_N$. Given $j,k$, and $s$, the number of $m$ we must discard here is, by Lemma \ref{Th:selberg},
\[ \ll \frac{N}{W (\log{N})^2} \left(\prod_{p \mid W}\left(1-\frac{1}{p}\right)^{-2}\right) \left(\prod_{p\mid j+s}\left(1-\frac{1}{p}\right)^{-1}\right).\]
From \eqref{qo1}, the product on $p$ dividing $W$ is $O(1)$. Since $0 < |j+s| \le K(1+N)$, the product on $p$ dividing $j+s$ cannot exceed $O(\log\log{N})$; see \cite[Theorem 328, p. 352]{HW08}. Summing on the $O(1)$ possibilities for $j,k,s$, we see we must discard a total of
\[\ll \frac{1}{W} \frac{N \log\log{N}}{(\log{N})^2}\]
primes $m$ from these cases. This is $o(Q_N)$.

Naturally, the heart of the proof is the consideration of those cases when $a\ge 2$. Let
\begin{multline*}
{Q}_{N, a, i, j, k, s} := \#\{m \in {[N,(1 + \frac{1}{K})N]}: m \equiv b \pmod{W},
\\
m \text{ prime}, \text{ and } |km + ja^i + s| \text{ prime and $\ne m$}\}.
\end{multline*}
In the next section, we will show that
\begin{equation}\label{eq:ourgoal}
\sum_{a = 2}^K \sum_{0 \le i \le K \log{N}} \sum_{1 \le |j| \le K} \sum_{k = 1}^K \sum_{s \in \Ss_N} {Q}_{N, a, i, j, k, s} \ll \frac{N}{W\log{N}} \exp(-\frac{1}{2}\theta_K M)
\end{equation}
for a certain constant $\theta_K > 0$. Fixing $M$ sufficiently large in terms of $K$, we see that these values of $a$ force us to discard at most (say) $\frac{1}{2}Q_N$ primes.

Collecting the above estimates, we find that there are $\gg Q_N \gg N/\log{N}$ remaining primes $m$, all of which are digitally delicate in the strong sense of Theorem \ref{thm:taoimproved}.

%
%

\subsection{Detailed counting}
In this section, we establish the claimed upper bound on
\[
\sum_{a = 2}^K \sum_{0 \le i \le K\log{N}} \sum_{1 \le |j| \le K} \sum_{k = 1}^K \sum_{s \in \Ss_N} {Q}_{N, a, i, j, k, s}.
\]

We first handle the sum on $i$. For now treat $a, j, k$, and $s$ as fixed and consider
\begin{equation}
\label{i}
\sum_{0 \le i \le K\log{N}} Q_{N, a, i, j, k, s}.
\end{equation}
Because of our careful choice of $b$, either $kb + j + s \equiv 0 \pmod{q_p}$, or $kb + ja + s \equiv 0 \pmod{q_p}$ for all our $p \in \mathcal{P}_{a, j, k, s}$. In the former case, if $i \equiv 0 \pmod{p}$ for some $p \in \mathcal{P}_{a, j, k, s}$, then $q_p \mid{kb + ja^i + s}$. In the latter case, the same divisibility holds instead when $i \equiv 1 \pmod{p}$. (To see these results, recall that $a^p\equiv 1\pmod{q_p}$, by the choice of $q_p$.) If $q_p \mid{kb + ja^i + s}$ then at most two values of $m$ for a given $a, i, j, k$, and $s$ can have $|km + ja^i + s|$ prime: those where $|km + ja^i + s| = q_p$. So the number of $m$ contributed to (\ref{i}) in this way is $O(\log{N})$.

We thus focus on the remaining values of $i$. Let
\[
\mathcal{I} := \{0 \le i \le K \log{N}: \text{ for all } p \in \mathcal{P}_{a, j , k, s},~q_p \nmid kb + ja^i + s \},
\]
where $\mathcal{I}$ is understood to depend on the given $a, j, k$, and $s$. Then
\begin{equation}\label{eq:ibound} \#\mathcal{I} \ll \left(\prod_{p \in \mathcal{P}_{a, j, k, s}} \left(1 - \frac{1}{p}\right)\right) \log{N}. \end{equation} Moreover,
\begin{equation}\label{eq:iagain}
\sum_{0 \le i \le K\log{N}} Q_{N, a, i, j, k, s}
 \ll \log{N} + \sum_{i \in \mathcal{I}} Q_{N,a,i,j,k,s}.
\end{equation}
Whenever $ja^i+s= 0$, the quantity $Q_{N,a,i,j,k,s}$ vanishes, and so the final sum on $i$ can be restricted to those values with $ja^i+s\ne 0$.
By another application of Lemma \ref{Th:selberg}, as long as $ja^i+s\ne 0$,
\[
{Q}_{N, a, i, j, k, s} \ll \frac{N}{W (\log{N})^2} \prod_{p \mid ja^i + s} \left(1 - \frac{1}{p}\right)^{-1}.
\]
(We omitted the product over $p$ dividing $W$ here, since \eqref{qo1} shows that product is $\asymp 1$.)
By the Cauchy--Schwarz inequality,
\begin{equation}
\label{eq:CS}
\sum_{\substack{i \in \mathcal{I} \\ ja^i+s\ne 0}} \prod_{p \mid ja^i+s} \left(1-\frac{1}{p}\right)^{-1} \le \Bigg(\sum_{i \in \mathcal{I}} 1\Bigg)^{1/2} \Bigg(\sum_{\substack{0 \le i \le K\log{N} \\ ja^i+s\ne 0}} \prod_{p \mid ja^i+s} \left(1 - \frac{1}{p}\right)^{-2}\Bigg)^{1/2}.
\end{equation}
The first right-hand sum simply counts the number of $i \in \mathcal{I}$, and so from \eqref{eq:moderatelymany} and \eqref{eq:ibound},
\begin{equation}
\label{eq:CSfirst}
\sum_{i \in \mathcal{I}} 1 = \#\mathcal{I} \ll \left(\prod_{p \in \mathcal{P}_{a, j, k, s}} \left(1 - \frac{1}{p}\right)\right) \log{N} \ll \exp(-\theta_K M) \cdot\log{N},
\end{equation}
for a constant $\theta_K > 0$.
To estimate the second sum of (\ref{eq:CS}), we begin by observing that
$(1 - \frac{1}{p})^{-2} = (1 + \frac{2}{p})(1 + \frac{3p-2}{p^3-3p+2})$, and that
\[ \prod_{p} \left(1 + \frac{3p-2}{p^3-3p+2}\right) \le \exp\left(\sum_{p}\frac{3p-2}{p^3-3p+2} \right)<\infty, \]
where the products and sums are over all primes $p$. Thus,
\[
\prod_{p \mid ja^i + s} \left(1 - \frac{1}{p}\right)^{-2} \ll \prod_{p \mid ja^i + s} \left(1 + \frac{2}{p}\right).
\]
We claim that truncating the last product to primes $p\le \log{N}$ will not change its magnitude. To see this, observe that
\[
\prod_{\substack{p \mid ja^i + s \\ p > \log{N}}} \left(1 + \frac{2}{p}\right) \le \exp\bigg(\frac{2}{\log{N}} \sum_{\substack{p \mid ja^i + s \\ p > \log{N}}} 1\bigg) \le \exp\left(\frac{2}{\log{N}} \frac{\log{|ja^i +s|}}{\log{\log{N}}}\right).
\]
Put $Z:= K\cdot K^{K\log{N}} + KN$. Since $|ja^i + s| \le |j|a^i + |s| \le Z$, we have $\log{|ja^i + s|} \le \log{Z} \ll \log{N}$, and so final expression in the preceding display is $O(1)$. Consequently,
\[
\prod_{p \mid ja^i + s} \left(1 + \frac{2}{p}\right) \ll \prod_{\substack{p \mid ja^i + s \\ p\le \log{N}}} \left(1 + \frac{2}{p}\right),
\]
as claimed. Now rewrite
\[
\prod_{\substack{p \mid ja^i + s \\ p \le \log{N}}} \left(1 + \frac{2}{p}\right) = \sum_{\substack{d \mid ja^i + s \\ p \mid d \Rightarrow p \le \log{N} \\ \Box \nmid d}} \frac{2^{\omega(d)}}{d}.
\]
Assembling the above, we can estimate the second sum in \eqref{eq:CS} as follows:
\begin{align*}
\sum_{\substack{0 \le i \le K\log{N} \\ ja^i+s\ne 0}} \prod_{p \mid ja^i + s} \left(1 - \frac{1}{p}\right)^{-2} &\ll \sum_{\substack{0 \le i \le K\log{N} \\ ja^i+s\ne 0}} \sum_{\substack{d \mid ja^i + s \\ p \mid d \Rightarrow p \le \log{N} \\ \Box \nmid d}} \frac{2^{\omega(d)}}{d}
\\
&\ll \sum_{\substack{d \le Z \\ p \mid d \Rightarrow p \le \log{N} \\ \Box \nmid d}} \frac{2^{\omega(d)}}{d} \sum_{\substack{0 \le i \le K \log{N} \\ d \mid ja^i + s}} 1.
\end{align*}
Suppose $i \ge 1$ is such that $d \mid ja^i + s$. Defining $B_d:=\gcd(d,ja)$, and keeping in mind that $d$ is squarefree, we find that \begin{equation}\label{eq:lotsofgcds} \gcd(d,s) = \gcd(d, ja^i) = \gcd(d,ja) = B_d, \end{equation}
and
\[\frac{ja}{B_d} \cdot a^{i-1} \equiv -\frac{s}{B_d} \pmod{\frac{d}{B_d}}.\] This congruence, along with \eqref{eq:lotsofgcds}, shows that $a^{i-1}$ belongs to a uniquely determined coprime residue class modulo $d/B_d$. Thus, $i$ belongs to a fixed residue class modulo $\ell_a(d/B_d)$, and so
\begin{align}
\sum_{\substack{d \le Z \\ p \mid d \Rightarrow p \le \log{N} \\ \Box \nmid d}} \frac{2^{\omega(d)}}{d} \sum_{\substack{0 \le i \le K \log{N} \\ ja^i \equiv -s \pmod{d}}} 1 \ll \sum_{\substack{d \le Z \\ p \mid d \Rightarrow p \le \log{N} \\ \Box \nmid d}} \frac{2^{\omega(d)}}{d} \left(\frac{K \log{N}}{\ell_a\left(\frac{d}{B_d}\right)} + 1\right)\notag
\\
\label{sumd}
\ll \log{N} \sum_{\substack{d \in \mathbf{N} \\ (d/B_d,a) = 1}} \frac{2^{\omega(d)}}{d \cdot \ell_a\left(\frac{d}{B_d}\right)} + \sum_{\substack{d \le Z \\ p \mid d \Rightarrow p \le \log{N} \\ \Box \nmid d}} \frac{2^{\omega(d)}}{d}.
\end{align}
To handle the first right-hand sum, write $d = B_d d'$. Since $B_d \mid ja$,
\[
\sum_{\substack{d \in \mathbf{N} \\ (a, d/B_d) = 1}} \frac{2^{\omega(d)}}{d \cdot \ell_a\left(\frac{d}{B_d}\right)}
\le \sum_{B \mid ja} \sum_{\substack{d' \in \mathbf{N} \\ (a, d') = 1}} \frac{2^{\omega(Bd')}}{Bd' \cdot \ell_a(d')}
 \le \sum_{B \mid ja} \frac{2^{\omega(B)}}{B} \sum_{\substack{d' \in \mathbf{N} \\ (a, d') = 1}} \frac{2^{\omega(d')}}{d' \cdot \ell_a(d')},
\]
due to the complete subadditivity of $\omega$. Erd\H{o}s has proven a strengthening of Romanoff's theorem \cite[see Lemma 2, p. 417]{erdos51} saying that for any two positive integers $A$ and $S$, the series
\[
\sum_{\substack{n \in \mathbf{N} \\ (n, A) = 1}} \frac{S^{\omega(n)}}{n \cdot \ell_A(n)}
\]
is convergent. Taking $n=d'$, $A=a$, and $S = 2$, and noting that $a$, $j$, and $B$ are all $O(1)$, we see that the first of the two summands in (\ref{sumd}) is $O(\log{N})$.

To deal with the second summand of (\ref{sumd}), we reverse a  previous step and rewrite
\[
\sum_{\substack{d \le Z \\ p \mid d \Rightarrow p \le \log{N} \\ \Box \nmid d}} \frac{2^{\omega(d)}}{d}
= \prod_{p \le \log{N}} \left(1 + \frac{2}{p}\right)
\le \left(\prod_{p \le \log{N}} \left(1 - \frac{1}{p}\right)^{-1}\right)^2.
\]
By Mertens' Theorem (see \cite[Theorem 429, p. 466]{HW08}), the final expression is $O((\log{\log{N}})^2)$, which is certainly $O(\log{N})$. Hence,
\[
\sum_{\substack{0 \le i \le K\log{N} \\ ja^i+s\ne 0}} \prod_{p \mid ja^i + s} \left(1 - \frac{1}{p}\right)^{-2} \ll \log{N}.
\]
Substituting this estimate and (\ref{eq:CSfirst}) into (\ref{eq:CS}),
\[
\sum_{\substack{i \in \mathcal{I} \\ ja^i+s\ne 0}} \prod_{p \mid ja^i + s} \left(1 - \frac{1}{p}\right)^{-1}
\ll \log{N} \cdot \exp{\left(- \frac{1}{2} \theta_K M\right)}.
\]
We now deduce from \eqref{eq:iagain} that
\[
\sum_{0 \le i \le K \log{N}} Q_{N, a, i, j, k, s} \ll \frac{N}{W \log{N}} \exp{\left(- \frac{1}{2} \theta_K M\right)}.
\]
Finally, summing over the $O(1)$ possibilities for $a, j, k$, and $s$ yields \eqref{eq:ourgoal} and so completes the proof.

%
%




\section*{Acknowledgments} We would like to thank UGA's Center for Undergraduate Research Opportunities (CURO) for the opportunity to work together. Work of the first author is supported by the 2015 CURO Summer Fellowship, and work of the second author is supported by NSF award DMS-1402268. We are grateful to Christian Elsholtz for insightful comments.

\providecommand{\bysame}{\leavevmode\hbox to3em{\hrulefill}\thinspace}
\providecommand{\MR}{\relax\ifhmode\unskip\space\fi MR }
\providecommand{\MRhref}[2]{%
  \href{http://www.ams.org/mathscinet-getitem?mr=#1}{#2}
}
\providecommand{\href}[2]{#2}

\end{document}